\documentclass[9pt, a4paper]{article}
\usepackage{amscd,amsmath,amsfonts,amssymb,amsthm,cite,mathrsfs,color} 
\usepackage{dsfont} \usepackage{leftidx}
\usepackage{tikz} \usepackage{enumerate}
 \usepackage{appendix}

\theoremstyle{plain}
\newtheorem{theorem}{Theorem}[section]

\theoremstyle{definition}
\newtheorem{remark}[theorem]{Remark}
\theoremstyle{definition}

\theoremstyle{remark}
\newtheorem*{acknow}{Acknowledgments}

\numberwithin{equation}{section}
\numberwithin{theorem}{section}

\def\be{\begin{equation}}
\def\ee{\end{equation}}
\def\bae{\begin{eqnarray}}
\def\eae{\end{eqnarray}}

\def\LE{\mathrm{LE}}
\def\GE{\mathrm{GE}}

\begin{document}

\title{Phase transitions 
 for products  of   characteristic polynomials   under  Dyson  
  Brownian motion}
\author{Peter  J. Forrester\footnotemark[1]   ~ and  Dang-Zheng Liu\footnotemark[2]}
\maketitle
\renewcommand{\thefootnote}{\fnsymbol{footnote}}
\footnotetext[1]{ARC Centre of Excellence for Mathematical and Statistical Frontiers, School of Mathematics and Statistics, The University of Melbourne, Victoria 3010, Australia. Email: pjforr@unimelb.edu.au}
\footnotetext[2]{CAS Key Laboratory of Wu Wen-Tsun Mathematics, School of Mathematical Sciences, University of Science and Technology of China, Hefei 230026, P.R.~China. E-mail: dzliu\symbol{'100}ustc.edu.cn}



\begin{abstract}
We study the averaged products of characteristic polynomials  for   the Gaussian and Laguerre $\beta$-ensembles  with external source, and prove Pearcey-type phase transitions for particular full rank perturbations of source.
The phases are characterised by determining the explicit functional forms of the scaled limits of the averaged products of characteristic polynomials, which are given as certain multidimensional integrals, with dimension
equal to the number of products. 
\end{abstract}

\section{Introduction}

\subsection{Matrix-valued Brownian motion}
 
Let $Y$ be an $N \times N$ standard Gaussian matrix with real, complex or quaternion entries, the latter represented as particular $2 \times 2$ complex
matrices. Forming $G = {1 \over 2} (Y + Y^*)$ then gives a real symmetric, complex Hermitian or self-dual quaternion random matrix. This is the
construction of matrices  drawn from the Gaussian orthogonal, Gaussian unitary and Gaussian symplectic ensembles respectively (GOE, GUE and GSE) --- the
naming relates to the subset of unitary matrices which diagonalise $G$. Associated with each ensemble is a Dyson index $\beta$, which takes on the value
1,2, or 4 depending on the number of independent real and imaginary parts in a single entry of $G$.

The dynamical 
description of  the eigenvalues of GOE, GUE and GSE random matrices as diffusion processes was first stated by Dyson \cite{dy} in 1962.      
 Replacing the real and imaginary parts of each entry of $Y$ by independent Brownian motions, Dyson constructed a matrix-valued random process
 and further observed that the eigenvalues satisfy a system of stochastic differential equations (SDEs), specifying what is known as Dyson Brownian motion.
 In subsequent years, Dyson Brownian motion has been both an important research topic in its own right, and an effective tool
 in random matrix theory;  see e.g.~\cite{Ch91,RS93, Jo01, ADM, EY11,LSY19}. In particular, see Erd\"os and Yau's book \cite{EY17}  for  a comprehensive survey on   its application to
 the universality problem  for Wigner matrices. 
 
 To give some more detail, let $\{B_{j,k}(t), \tilde{B}_{j,k}(t): 1\leq j\leq k\leq N\}$ be  a set of i.i.d.~real-valued standard Brownian motions.
  The symmetric ($\beta = 1$) and complex Hermitian ($\beta = 2$) matrix-valued Brownian motion $H(t)$, is a random process on matrices  with
  entries equal to 
 \begin{equation}
 H_{j,k}=\begin{cases} 
  \frac{1}{\sqrt{2}}\big(B_{j,k}(t)+i(\beta-1) \tilde{B}_{j,k}(t)\big), & j<k, \\
  B_{j,j}, & j=k. \end{cases}
 \end{equation}
For $t\geq 0$,  let $x_{1}(t), \ldots, x_{N}(t)$ denote the eigenvalues of the Hermitian matrix
 \begin{equation}
 X(t)=H(t)+X(0)\label{GE} 
 \end{equation}
 where $X(0)$ is a fixed symmetric ($\beta = 1$) or complex Hermitian ($\beta = 2$) matrix. 
 Let $B_1(t), \dots$, $B_N(t)$ denote standard Brownian motions.
 A fundamental observation of Dyson \cite{dy} (see  \cite{AGZ} and \cite{Ka16} for text book treatments)
 is that the eigenvalues satisfy a system of SDEs 
 \begin{equation}
 dx_{i}(t)=dB_{i}(t)+\frac{\beta}{2}\sum_{j: j\neq i}\frac{1}{x_{i}(t)-x_{j}(t)}dt, \quad i=1, \ldots, N, \label{GEprocess}
 \end{equation}
with $\{x_{i}(0) \}_{i=1}^N$ the eigenvalues of $X(0)$.

Also of interest is the extension of (\ref{GE}) to chiral matrices specified by
\begin{equation}\label{1.4a}
X(t) = \begin{bmatrix} 0_{p \times p} & Z(t) \\
Z^*(t) & 0_{N \times N} \end{bmatrix} +  \begin{bmatrix} 0_{p \times p} & A \\
A^* & 0_{N \times N} \end{bmatrix}.
 \end{equation}
 Here $Z(t)$ is a $p \times N$ random matrix with entries i.i.d.~real ($\beta = 1$) or complex ($\beta = 2$) Brownian motions, while $A$ is a fixed
 $p \times N$ matrix with real ($\beta = 1$) or complex ($\beta = 2$) entries. For $p \ge N$, this matrix structure implies $X(t)$ has $p-N$ zero eigenvalues.
 The remaining $2N$ eigenvalues occur in $\pm$ pairs, with the positive eigenvalues equal to the eigenvalues of the matrix
 \begin{equation}\label{1.4b}
 (Z(t) + A)^* (Z(t) + A).
  \end{equation}
  It was shown by Bru \cite{Bru} in the real case, and by Konig and O'Connell \cite{KO} in the complex case that the eigenvalues of 
  (\ref{1.4b}) evolve as the system of SDEs
\begin{equation}
 dx_{i}(t)=2\sqrt{x_{i}(t)}dB_{i}(t)+\beta \bigg( p+\sum_{j: j\neq i}\frac{x_{i}(t)+x_{j}(t)}{x_{i}(t)-x_{j}(t)}\bigg)dt, \quad i=1, \ldots, N, \label{LEprocess}
 \end{equation}  
with $\{x_{i}(0) \}_{i=1}^N$ the eigenvalues of $A^* A$.

Dyson, in his selected papers \cite{dy96},   commented  on the matrix-valued Brownian motion: ``the physical motivation for introducing it is that it represents a system whose Hamiltonian is a sum of two parts, one known and one unknown".   For the known part, introduce the notation $x_i(0) = f_i$ ($i=1,\dots,N)$ for the eigenvalues of $X(0)$ in (\ref{GE}) and of $A^*A$ in (\ref{1.4b}), whereas for the eigenvalues for general $t$ in these matrices
introduce the notation $x_i(t) = x_i$ ($i=1,\dots,N)$. Let $\Delta_{N}(x):=\prod_{1\leq j<k\leq
N}(x_{k}-x_{j})$ denote the Vandermonde product. The eigenvalue probability density for the matrices (\ref{GE}) can be calculated as \cite[Eqs.~(11.101) and (13.146) with $t \to 1$, $\tilde{\beta} \mapsto 1/2T$ then
$T \mapsto \sqrt{t}$]{forrester}
\begin{multline}  \label{PDFforGauss}
P^{(G)}_{t,N}(x) =\frac{1}{\Gamma_{\beta,N}} t^{- \frac{1}{4}N(2+\beta(N-1))}
  \prod_{i=1}^{N}e^{-\frac{1}{2t}(x^{2}_i+f^{2}_i) }  \ |\Delta_{N}(x)|^{\beta} {{\phantom{k}}_0\mathcal{F}_0}^{\!(2/\beta)}\big(\frac{1}{\sqrt{t}}x;\frac{1}{\sqrt{t}}f\big),
\end{multline}
while for the functional form of the eigenvalue probability density for the matrices (\ref{1.4b}) we have \cite[Eqs.~(11.105) and (13.147) with $t \to 1$, $\tilde{\beta} \mapsto 1/T$ then
$T \mapsto t$ and the change of variables $x_i^2 \mapsto x_i$, $(x_i^{(0)})^2 \mapsto f_i$]{forrester}
\begin{multline}  \label{PDFforLaguerre}
P^{(L)}_{t,N}(x) =\frac{1}{Z_{a,\beta,N}}t^{-aN-\frac{1}{2}\beta N(N-1)}
  \prod_{i=1}^{N}x_{i}^{a-1}e^{-\frac{1}{t}(x_i+f_i)}  \\  \times  \ |\Delta_{N}(x)|^{\beta}\,  {{\phantom{k}}_0\mathcal{F}_1}^{\!(2/\beta)}\big(a+\frac{1}{2}\beta(N-1);\frac{1}{t}x;\frac{1}{t}f\big),
\end{multline}
where $a = {\beta \over 2} (p-N+1)$. In the present setting the multivariate functions $ {{\phantom{k}}_0\mathcal{F}_0}^{\!(2/\beta)}$, $ {{\phantom{k}}_0\mathcal{F}_1}^{\!(2/\beta)}$ are most
naturally defined as the matrix integrals
\begin{align*}
{{\phantom{k}}_0\mathcal{F}_0}^{\!(2/\beta)}\big(x;f\big) & =\int_{Q(N)} e^{\mathrm{tr}(Q\Lambda_{x}Q^{-1}\Lambda_{f})} \, dQ, \\
{{\phantom{k}}_1\mathcal{F}_0}^{\!(2/\beta)}\big(x;f\big) & =\int_{Q(N)} dQ  \int_{Q(p)} dR  \, e^{\mathrm{tr}(Q \tilde{\Lambda}_{x} R^{-1}\tilde{\Lambda}_{f}^{t} +  \tilde{\Lambda}_{f} R  \tilde{\Lambda}_{x}^{t}
Q^{-1})},
\end{align*}
with $ \tilde{\Lambda}_{x}=  \begin{bmatrix}  \sqrt{ \Lambda_x}&
 0_{N\times (p-N)} \end{bmatrix}$. 
Here $Q(N)$ denotes the classical groups $O(N)$ ($\beta = 1$), $U(N)$ ($\beta = 2$) and $Sp(2N)$ ($\beta = 4)$, and for $Q \in Q(N)$, $dQ$ denotes the corresponding normalized Haar measure.
The normalization constants $\Gamma_{\beta,N}$ and $Z_{a,\beta,N}$ are   given  in Appendix \ref{A.B}, and 
more detail about hypergeometric functions is given in Appendix \ref{A.A}.

The SDEs (\ref{GEprocess}) and (\ref{LEprocess}) have meaning for general $\beta > 0$, and can be shown to correspond to the Fokker-Planck dynamics of certain particle systems on the line
and half line respectively, which interact via a logarithmic potential in the presence of a heat bath at inverse temperature $\beta$; see e.g.~\cite[Ch.~11]{forrester}.
The particle probability density functions are again given by (\ref{PDFforGauss}) and (\ref{PDFforLaguerre}), but now with 
$ {{\phantom{k}}_0\mathcal{F}_0}^{\!(2/\beta)}$ and $ {{\phantom{k}}_0\mathcal{F}_1}^{\!(2/\beta)}$ defined as multivariate hypergeometric functions based on Jack polynomials;
see \cite[Ch.~13]{forrester} and Appendix \ref{A.B} for a brief summary. It is furthermore the case that $ {{\phantom{k}}_0\mathcal{F}_0}^{\!(2/\beta)}$ and $ {{\phantom{k}}_0\mathcal{F}_1}^{\!(2/\beta)}$ 
for general $\beta > 0$ correspond to the eigenvalue PDF of recursively defined random matrices \cite{for2012}, which in turn for $\beta = 1,2$ and 4 correspond to the
Gaussian and Laguerre ensembles with an external source; see  \cite{des,forrester,for2012,for2013}. In keeping with this, we will denote the ensembles corresponding to (\ref{PDFforGauss}) and
 (\ref{PDFforLaguerre}) as $\GE_{\beta, t, N}(x;f)$ and $\LE_{a,\beta,t, N}(x;f)$ --- in words Gaussian $\beta$-ensemble with a source, and Laguerre $\beta$-ensemble with a source ---
 respectively.

\subsection{Products of characteristic polynomials --- Riemann zeros}
The celebrated Riemann hypothesis in prime number theory asserts that the complex zeros of the Riemann zeta function
$$
\zeta (s) = \sum_{n=1}^\infty {1 \over n^s} = \displaystyle \prod_{\rm primes} \Big ( 1 - {1 \over p^s} \Big )^{-1}, \qquad {\rm Re} \, (s) > 1,
$$
when analytically continued in the whole complex plane, are all of the form $s = {1 \over 2} \pm i E$, $E > 0$. These are termed the Riemann zeros.
There is a conjecture attributed to Hilbert and P\'{o}lya \cite{wikiHP} asserting that the Riemann zeros correspond to the eigenvalues of an as yet unknown
unbounded self adjoint operator. From the topic of quantum chaos there is a prediction \cite{BGS84} that the highly excited energy levels of a generic Schr\"odinger operator have
the same statistics as the bulk eigenvalues of large real symmetric (assuming a time reversal symmetry), or complex Hermitian (no time reversal symmetry). In keeping with
these points, and based on analytic evidence from the work of Montgomery \cite{Mo73}, and large scale, high precision numerical work of Odlyzko \cite{Od89} of the
10${}^{20}$-th Riemann zero and over 70 million of its neighbours, the Montgomery--Odlyzko law asserts that the large Riemann zeros have the same statistical properties as the bulk
eigenvalues of large complex Hermitian matrices.

Keating and Snaith \cite{ks} extended the Montgomery--Odlyzko law by proposing the use of the characteristic polynomial of Haar distributed random unitary matrices
$Z(U,\theta) := \det (I - U e^{-i \theta})$ to model the statistical  properties of $\zeta(s)$ for $s = {1 \over 2} + i T$, $T \gg 1$. It has been known since the work of
Dyson \cite{Dy62} that the statistical properties of the eigenvalues of large Haar distributed unitary matrices coincide with the statistical properties of the bulk
eigenvalues of large GUE matrices. Specifically, with $N = \log T$ (the average spacing
between eigenvalues and Riemann zeros then agree to leading order) it was hypothesised that the statistical properties of $Z(U,\theta)$ correctly give the corresponding
statistical properties of $\zeta(0.5+iT)$ up to known arithmetic factors. A celebrated example exhibited in \cite{ks} applied the moments formula for $Z(U,\theta)$ \cite{BF97f,ks}
\begin{equation}\label{Em}
\Big \langle \, | Z(U,\theta) |^{2k}  \Big \rangle = \prod_{j=1}^N {\Gamma(j) \Gamma(j + 2k) \over \Gamma(j+k)^2 } \mathop{\sim}\limits_{N \to \infty} {G^2(k+1) \over G(2k+1)} N^{k^2},
\end{equation}
where $G(z)$ denotes the Barnes $G$-function, and satisfies $G(z+1) = \Gamma(z) G(z)$, to predict the corresponding asymptotic formula for the
moments of the Riemann zeta function
$$
{1 \over T} \int_0^T  \Big | \zeta \Big ( {1 \over 2} + i t \Big ) \Big |^{2k} \, dt  \mathop{\sim}\limits_{T \to \infty} {G^2(k+1) \over G(2k+1)} a(k) \Big ( \log {t \over 2 \pi} \Big )^{k^2},
$$
where $a(k)$ is a known number theoretic constant.

\subsection{Products of characteristic polynomials --- phase transition}

The general topic of averages of products and ratios of characteristic polynomials in random matrix ensembles has attracted an enormous amount of research; papers relevant to the
present study, where we focus attention on
\begin{equation} \label{productmeanGE}
K_{t, N}^{(G)}(s;f):=\left\langle \prod_{j=1}^n\prod_{k=1}^N (s_j-\sqrt{\frac{2}{\beta}}x_k)\right\rangle_{x\in\GE_{\beta,t,N}(x;f)}
\end{equation}
 and
\begin{equation} \label{productmeanLE}
K_{t,N}^{(L)}(s;f):=\left\langle \prod_{j=1}^n\prod_{k=1}^N (s_j-\frac{2}{\beta}x_k)\right\rangle_{x\in\LE_{a,\beta,t,N}(x;f)},
\end{equation}
 include \cite{af,bds,bs,bh,bh00,bh3,bh4,dl,dl2,hko01}. In the case $n=1$ and with $\beta = 2$ these averages are well known to relate to multiple orthogonal polynomials
 associated with the corresponding ensembles; see \cite{BK05,df1b}.
 
 We seek the asymptotic form of (\ref{productmeanGE}) and (\ref{productmeanLE}) in settings corresponding to what for $\beta = 2$ has been termed the 
 Pearcey universality class; see  \cite{av07,bk07,bh98, kfw11,tw2006}. In the Brownian motion picture, this corresponds to the circumstance when in the
 ensemble $\GE_{\beta,t,N}(x;f)$ the eigenvalues take on one of two values chosen symmetrically about the origin. As time increases, the eigenvalues
 spread, first in the neighourhood of the two values, then eventually colliding in the neighbourhood of the origin. It is literally a ``collision" with the origin that
 is the physical mechanism of the transition in the ensemble $\LE_{\beta,t,N}(x;f)$, when the initial condition has all but a finite number of eigenvalues concentrated at a point
 some distance away on the positive half axis.
 
 Earlier studies for general $\beta > 0$ have considered phase transitions in the finite rank case, say $f_{r+1}=\cdots=f_N=0$, specifying the
 scaling limits of the averages  (\ref{productmeanGE}) and (\ref{productmeanLE}); see 
  \cite{for2012} ($n=1$ and finite $r$), \cite{dl}($r=0$ and finite $n$) and \cite{dl2} (finite $n,r$). This follows works on the scaling limits of the distribution function
  of the largest eigenvalue  in  the cases $\beta = 1,2$ and 4 corresponding to a variant of the Laguerre $\beta$-ensemble with a source
  known as the general variance Wishart ensemble \cite{for2013,DEKV13}. The analogue of finite rank is then a spiking of the
  covariance matrix;  see \cite{bbp}  for spiked complex Wishart matrices   and  \cite{bv1,bv2,for2013,jkt,wang} for spiked real and quaternion  Wishart matrices.
  
  The starting point is to implement duality formulas for the averaged products of characteristic polynomials in the two ensembles  $\GE_{\beta,t,N}(x;f)$ and
  $\LE_{\beta,t,N}(x;f)$, due to Desrosiers \cite[Proposition 8]{des} (see also \cite{for2012}), which read
\begin{multline}  \label{dualityGE}
\left\langle \prod_{j=1}^n\prod_{k=1}^N (s_j-i\sqrt{\frac{2}{\beta}}x_k)\right\rangle_{x\in \GE_{\beta,t,N}(x;f)}=
\left\langle \prod_{k=1}^N \prod_{j=1}^n (x_j-i\sqrt{\frac{2}{\beta}}f_k)\right\rangle_{x\in \GE_{4/\beta,t,n}(x;s)},
\end{multline}  and
\begin{multline}   \label{dualityLE}
\left\langle \prod_{j=1}^n\prod_{k=1}^N (s_j+\frac{2}{\beta}x_k)\right\rangle_{x\in \LE_{a,\beta,t, N}(x;f)}=
\left\langle \prod_{k=1}^N \prod_{j=1}^n (x_j+\frac{2}{\beta}f_k)\right\rangle_{x\in \LE_{ 2a/\beta, 4/\beta,t,n}(x;s)},
\end{multline} 
where the variables $s_{j}$ denote the argument of  characteristic polynomials. We stress  that the duality relation has transformed the $N$-dimensional integral on the LHS into  an $n$-dimensional integral on the RHS, which  is suitable for     asymptotic analysis  as  $N\to\infty$ whenever $n$ is fixed, at least in  principle; see \cite{Fo92,dl,dl2} for some earlier relevant results.

The rest of the paper is organized as follows. Asymptotic analysis relating to the Pearcey universality class for the Gaussian $\beta$-ensemble with a source is undertaken in Section \ref{sect:gau},
and that for the Laguerre $\beta$-ensemble with a source in Section \ref{sect:lag}.
Appendix \ref{A.B} lists some constants,
and Appendix \ref{A.A} gives a brief account of the hypergeometric functions in  (\ref{PDFforGauss}) and (\ref{PDFforLaguerre}).

\section{Transition for the Gaussian ensemble}\label{sect:gau}

In this section we  study  scaled limits of the averaged product of characteristic polynomials for the Gaussian $\beta$-ensemble with a source, under the assumption that the vector $f$ of initial eigenvalues
takes on just two values chosen symmetrically about the origin, and in (close to) equal proportion.
  Rescaling  the time as $t=\hat{t}/N$, we will see that as $\hat{t}$ changes a critical value $t_{c}$ is reached, and so distinguishing three different  regimes:  (i) subcritical regime of $\hat{t}>t_{c}$; (ii) critical regime  of $\hat{t}=t_{c}$;   and  (iii) supercritical regime of $\hat{t}<t_c$.

Two families of multivariate functions of extended Selberg type are required.
One is  the  Pearcey weighted function  \begin{multline} P_{n,m}^{(\alpha)}(\tau;y;\sigma):= \frac{1}{\Gamma_{2/\alpha,n}} \int_{\mathbb{R}^n}e^{-\sum_{j=1}^{n}(\frac{1}{4}u_{j}^{4}+\frac{\tau}{2}u_j^{2})} \\
\times  \prod_{j=1}^{n}\prod_{k=1}^{m}(i u_{j}+\sigma_{k}) \,|\Delta_{n}(u)|^{\frac{2}{\alpha}} {\phantom{j}}_0\mathcal{F}_0^{(\alpha)}(iu;y)\, d^nu, \label{Pearceyfunctions}\end{multline}
where $\alpha\in \mathbb{R}_{+}, \tau\in \mathbb{R}$ and $y \in \mathbb{R}^n, \sigma \in \mathbb{R}^m$.  The other is   the   Gaussian weighted  function
 \begin{multline}\label{Gdef} G_{n,m}^{(\alpha)}(y;\sigma)=\frac{1}{\Gamma_{2/\alpha,n}} \int_{\mathbb{R}^n} e^{-\sum_{j=1}^{n}\frac{1}{2}(u_j^{2}-y_{j}^2)} \\
\times  \prod_{j=1}^{n}\prod_{k=1}^{m}(i u_{j}+\sigma_{k}) \,|\Delta_{n}(u)|^{\frac{2}{\alpha}} {\phantom{j}}_0\mathcal{F}_0^{(\alpha)}(iu;y)\, d^nu.  \end{multline}
 The latter has been defined in \cite{dl2}   except that   the trivial factor $e^{\sum_{j=1}^{n}y_j^{2}/2}$ was absent there. We  immediately  see from the duality relation \eqref{dualityGE} that
\be G_{n,m}^{(\alpha)}(y;\sigma)=(i\sqrt{\alpha})^{mn}G_{m,n}^{(1/\alpha)}(i\sqrt{\alpha}\sigma;i\sqrt{\alpha}y).\ee

\begin{theorem}  \label{zerotransition}
 For a fixed integer $r\geq 0$, suppose that $N$ is a positive integer   such that $N+r$ is even and moreover that
 \be f_{r+1}=\cdots=f_{(N+r)/2}=-f_{1+(N+r)/2}=\cdots=-f_{N}=\sqrt{\beta/2}\, b, \ b>0.\label{confluentcaseG}\ee
With   \eqref{productmeanGE}, as $N\rightarrow \infty$ the following hold uniformly for any   $y_1, \ldots, y_n$ in a compact subset of $\mathbb{R}$.

\begin{enumerate}[(i)]

\item  
When $t>b^2$, let  $f_1, \ldots, f_r$   be  in a compact subset of $\mathbb{R}$ and  set
     \begin{equation} s_j=\frac{t}{\sqrt{t-b^2}}\frac{y_j}{N}, \quad j=1,\ldots,n. \label{subscalingG}\end{equation}
  For even $n$ we have
   \begin{equation}\label{2.6}
     \lim_{N \to \infty} \frac{1}{\Psi_{\mathrm{sub}}}K_{t/N,N}^{(G)}(s;f)= \gamma_{n/2}(4/\beta)\,e^{-i \sum_{j=1}^{n}y_{j}}\! {{\phantom{k}}_1F_1}^{\!(\beta/2)}\big(n/\beta;2n/\beta;2i y\big),\end{equation}
where the function $_{1}F_{1}$   is defined in \eqref{hfpq} of  Appendix \ref{A.A}.
\item   
 When $t=b^{2}(1+\tau/\sqrt{N})^{-1}$, let  $f_{m+1}, \ldots, f_{r}$  be  in a compact subset of $\mathbb{R}\!\setminus\!\{0\}$ with $0\leq m\leq r$, and set
    \begin{equation} s_j= N^{-\frac{3}{4}} y_j, \ j=1,\ldots,n  \  \mbox{and} \
  f_k=\sqrt{\frac{\beta}{2}}  N^{-\frac{1}{4}} \sigma_k, \  k=1, \ldots, m   \label{criscalingG}\end{equation}
 with $\tau, \sigma_1, \ldots, \sigma_m \in \mathbb{R}$. We have
 \begin{equation}  \lim_{N \to \infty} \frac{1}{\Psi_{\mathrm{crit}}}K_{t/N,N}^{(G)}(s;f)=  P_{n,m}^{(\beta/2)}(\tau;y;\sigma).  \end{equation}
\item   
 When $t<b$,  let  $f_{m+1}, \ldots, f_{r}$ be  in a compact subset of $\mathbb{R}\!\setminus\!\{0\}$ with $1\leq m\leq r$,  and set
    \be s_j= \sqrt{\frac{t(b^2-t)}{b^2}}\frac{y_j}{\sqrt{N}}, j=1,\ldots,n,  
     \
  f_k=\sqrt{\frac{\beta tb^2}{2(b^2-t)}}\frac{ \sigma_k}{\sqrt{N}}, k=1, \ldots, m   \label{supscalingG}\ee
 with   $\sigma_1, \ldots, \sigma_m\in \mathbb{R}$. We have
 \begin{equation}    \lim_{N \to \infty} \frac{1}{\Psi_{\mathrm{sup}}}K_{t/N,N}^{(G)}(s;f) e^{\frac{t}{2b^2} \sum_{j=1}^n y_{j}^{2}}= G_{n,m}^{(\beta/2)}(y;\sigma).  \end{equation}
 \end{enumerate}
Here $\gamma_{n/2}(4/\beta)$, $\Psi_\mathrm{sub}, \Psi_\mathrm{cri}$  and $\Psi_\mathrm{sup}$ are constants given in  Appendix \ref{appendix}.
 \end{theorem}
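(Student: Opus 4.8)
The plan is to start from the duality relation \eqref{dualityGE}, which converts the $N$-dimensional average \eqref{productmeanGE} into an $n$-dimensional integral over the Gaussian $\beta'$-ensemble with $\beta' = 4/\beta$ and source $s$. Under the confluent choice \eqref{confluentcaseG}, the source vector for the outer ensemble takes only two values $\pm\sqrt{\beta/2}\,b$ in (nearly) equal proportion, plus $r$ further eigenvalues $f_1,\dots,f_r$; this structure enters the RHS through the hypergeometric factor ${}_0\mathcal{F}_0^{(\alpha)}$ appearing in \eqref{PDFforGauss}. The first concrete step is to write the RHS of \eqref{dualityGE} explicitly as an $n$-fold integral: the integration variables $u_1,\dots,u_n$ play the role of the eigenvalues of the dual Gaussian ensemble, we get a Gaussian weight $e^{-\frac{N}{2t}\sum u_j^2}$ (after the time rescaling $t\to t/N$), a Vandermonde $|\Delta_n(u)|^{2/\alpha}$ with $\alpha = \beta/2$, the product $\prod_{j,k}(u_j - i\sqrt{2/\beta}f_k)$ encoding the characteristic polynomials evaluated at the $f_k$, and the hypergeometric kernel ${}_0\mathcal{F}_0^{(\alpha)}\big(\tfrac{1}{\sqrt{t/N}}u; \tfrac{1}{\sqrt{t/N}}s\big)$ coupling $u$ to the rescaled arguments $s$.

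Next I would split the characteristic-polynomial product according to the block structure of the source: the $(N\mp r)/2 \approx N/2$ eigenvalues at $\pm\sqrt{\beta/2}\,b$ contribute a factor $\prod_j (u_j^2 + b^2)^{N/2}$ (times lower-order corrections from the parity offset), while the $r$ small eigenvalues $f_1,\dots,f_r$ (in regime (ii)/(iii) rescaled as $f_k \sim N^{-1/4}\sigma_k$ or $N^{-1/2}\sigma_k$) contribute $\prod_j\prod_{k\le m}(u_j - i\sqrt{2/\beta}f_k)$ together with a factor from the $f_{m+1},\dots,f_r$ that stay bounded away from $0$; the latter should just renormalise into the constants $\Psi$. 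Combining the Gaussian weight with $(u_j^2+b^2)^{N/2} = e^{\frac{N}{2}\log(u_j^2+b^2)}$ gives an effective potential $N\big(\frac{1}{2}\log(u_j^2+b^2) - \frac{1}{2t}u_j^2\big)$ per variable, whose critical points govern the saddle-point analysis. The key computation is to locate these saddles: differentiating gives $\frac{u}{u^2+b^2} = \frac{u}{t}$, so either $u=0$ or $u^2 = t - b^2$. When $t > b^2$ there is a pair of nonzero saddles $\pm\sqrt{t-b^2}$ and we rescale $u_j = \pm\sqrt{t-b^2} + (\text{fluctuation})$, giving the $_1F_1$ of part (i); when $t = b^2(1+\tau/\sqrt N)^{-1}$ the three saddles coalesce, the quartic term survives at scale $u \sim N^{-1/4}$, producing the Pearcey weight $e^{-\frac14 u^4 - \frac{\tau}{2}u^2}$ of \eqref{Pearceyfunctions}; when $t < b^2$ only $u=0$ is a (degenerate) critical point and the Gaussian fluctuation at scale $u\sim N^{-1/2}$ yields $G_{n,m}^{(\beta/2)}$.

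For each regime the remaining work is to substitute the appropriate scaling of $u_j$ and of $s_j, f_k$ (as dictated by \eqref{subscalingG}, \eqref{criscalingG}, \eqref{supscalingG}), expand the effective potential to the order that survives, check that the ${}_0\mathcal{F}_0^{(\alpha)}$ kernel converges (using its integral representation over $Q(n)$ for $\beta=1,2,4$, or its Jack-polynomial series in general, together with the homogeneity ${}_0\mathcal{F}_0^{(\alpha)}(cu;s) = {}_0\mathcal{F}_0^{(\alpha)}(u;cs)$) to the limiting ${}_0\mathcal{F}_0^{(\alpha)}(iu;y)$ after absorbing the scaling factors, and verify the Vandermonde and the small-$f_k$ product converge to the integrands in \eqref{Pearceyfunctions}--\eqref{Gdef} or to the $_1F_1$ integral representation. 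The subcritical case additionally requires recognising that summing the contributions of the two symmetric saddles $\pm\sqrt{t-b^2}$, after the quadratic fluctuation integral, reproduces exactly the $n$-dimensional Hermite-type integral that defines ${}_1F_1^{(\beta/2)}(n/\beta; 2n/\beta; 2iy)$ and the prefactor $\gamma_{n/2}(4/\beta)e^{-i\sum y_j}$; this combinatorial matching of two saddles into a confluent hypergeometric function is, I expect, the most delicate bookkeeping. The main analytic obstacle, however, is uniformity: one must control the full $n$-dimensional integral (not just a neighbourhood of the saddle manifold) uniformly for $y$ in a compact set, which requires tail estimates on the Gaussian weight against the polynomial growth of the characteristic-polynomial product and of ${}_0\mathcal{F}_0^{(\alpha)}$ — plausibly handled by a crude bound $|{}_0\mathcal{F}_0^{(\alpha)}(iu;y)| \le {}_0\mathcal{F}_0^{(\alpha)}(|u|;|y|) \le e^{C\|u\|\,\|y\|}$ and dominated convergence, but this is where the technical care concentrates.
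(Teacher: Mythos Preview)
Your proposal is correct and follows essentially the same route as the paper: apply the duality \eqref{dualityGE} to obtain an $n$-fold integral, combine the Gaussian weight with the bulk factor $(u_j^2+b^2)^{(N-r)/2}$ into an effective potential $p(z)=\tfrac{1}{2t}\big(z^2-t\log(z^2+b^2)\big)$, and then run steepest descent in the three regimes determined by whether the nonzero saddles $\pm\sqrt{t-b^2}$ are real, coalesce with $0$, or are absent from $\mathbb{R}$.

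Two small points of calibration. First, in the supercritical regime the origin is \emph{not} degenerate: $p''(0)=\tfrac{1}{t}-\tfrac{1}{b^2}>0$, which is exactly why the fluctuation scale is $N^{-1/2}$ and the limit is Gaussian-weighted. Second, the paper does not redo the uniform tail estimates or the two-saddle combinatorics from scratch; it invokes Corollaries~3.11--3.12 of \cite{dl} for the Laplace-type asymptotics of Selberg integrals (these already build in the localisation and the multinomial sum over saddle assignments), and then uses the identity ${}_0\mathcal{F}_0^{(\beta/2)}\big(1^{(n/2)},(-1)^{(n/2)};iy\big)=e^{-i\sum y_j}\,{}_1F_1^{(\beta/2)}(n/\beta;2n/\beta;2iy)$ from \cite[Corollary~2.3]{dl} to identify the subcritical limit. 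Your sketch of doing this by hand (splitting $n$ into two groups of $n/2$ at $\pm\sqrt{t-b^2}$ and matching to a Hermite-type integral for ${}_1F_1$) is the right picture, but in writing it up you will save considerable effort by citing those results directly.
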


 The proofs of Theorem \ref{zerotransition} and   Theorem  \ref{transition} below build on asymptotic   results for  integrals of Selberg type; see Corollary  3.11    and  Corollary  3.12 in \cite{dl}.    Our new finding is   to establish  two families of phase transitions at the origin by adjusting external  parameters properly  and then by doing  very subtle calculations.    In the unitary case of $\beta=2$,  one can exactly solve the Gaussian and Laguerre ensembles with external source by using the famous HCIZ formula  (see e.g. \cite{forrester}),  and so gets more delicate results such as correlation functions and the distribution of the largest eigenvalue,  see \cite{bh98,tw2006} and \cite{kfw11}.   However, 
to our knowledge,  for $\beta \neq 2$,  even in the orthogonal case of $\beta=1$,  the  Pearcey-type results 
have previously been investigated only
for a single characteristic polynomial.
 
\begin{proof}[Proof of Theorem \ref{zerotransition}]  Introduce scaled variables \begin{equation} s_j= v+ \frac{1}{\rho N} y_j, \quad j=1,\ldots,n,\label{generalscalingG} \end{equation}
where $v$ and $\rho$ are yet to be determined.
With the assumption \eqref{confluentcaseG} in mind, application of the duality formula  \eqref{dualityGE} shows
\begin{align} \label{scalingproductmeanG}
&K_{t/N,N}^{(G)}(s;f)=\frac{1}{\Gamma_{4/\beta,n}}(-i)^{nN}  (N/t)^{\frac{n}{2}+\frac{n(n-1)}{\beta}} e^{\frac{N}{2t}\sum_{j=1}^{n}(v+ \frac{1}{\rho N} y_j)^{2}} I_N ,
\end{align} 
where  \begin{align}
 I_{N}=  \int_{\mathbb{R}^n}  & e^{-N \sum_{j=1}^n p(x_j)} \prod_{j=1}^{n} \prod_{k=1}^r (x_j-i\sqrt{2/\beta}f_k) \prod_{j=1}^{n} (x_{j}^2+b^2)^{-r/2}  \nonumber\\
  &\times    \ |\Delta_{n}(x)|^{4/\beta}\! {{\phantom{k}}_0\mathcal{F}_0}^{\!(\beta/2)}(x/t;iy/\rho) \,d^n x.\label{IintegralG}
\end{align}
Here the exponent is given by
\be p(z)=\frac{1}{2t}\big(z^{2}-2ivz-t\log(z^2+b^2)\big). \label{pfunctionG}\ee

We apply now the method of steepest descent to analyse the large $N$ form of the integral.
From \eqref{pfunctionG}, the   saddle point equation  reads 
\be p'(z)=\frac{z-iv}{t}-\frac{z}{z^2+b^2}=0, \label{saddleeqnG}\ee
 which further reduces to the cubic equation 
 \begin{equation} \xi^{3}-v\xi^{2}-(b^2-t)\xi+vb^2=0,\label{cubiceqnG}\end{equation}
 where  $\xi=-iz$.
 The above equation has appeared in \cite{bk2004,abk2005} and the detailed analysis about  its solutions plays a central role in   establishing  the local eigenvalue  statistics  in the bulk and at the soft edge in the
 case $\beta = 2$, where there is a determinantal structure.

To proceed, we take $v=0$. Then, as the time $t$ changes, there is a critical value $t_{c}=b^2$ at which (\ref{saddleeqnG}) has a double root, so
distinguishing the three different  regimes:  (i) subcritical for $t>t_{c}$; critical for $t=t_{c}$; 
 and  (iii) supercritical for $t<t_c$.

 \textbf{Subcritical case}: $t>b^2$. In this case  the two solutions of \eqref{saddleeqnG} are  $z_{\pm}=\pm \sqrt{t-b^2}$, and moreover $p(z)$ attains the same minimum over the   real axis exactly at $z=z_{\pm}$.  Obviously, we have
 \be p(z_{\pm})=\frac{1}{2}-\frac{b^2}{2t}-\frac{1}{2}\log t, \qquad   p''(z_{\pm})=\frac{2}{t^2}(t-b^2)>0.\ee   Recall the definition of the scaled variables \eqref{subscalingG},  by \cite[Corollary 3.12]{dl} we conclude   that as $N\rightarrow \infty$   the major contribution to the integral  $I_N$ comes from the  case that $n/2$  variables lie in the   neighborhood of $z_+$, and the other $n/2$  in that of $z_{-}$. Taking $\rho=\sqrt{t-b^2}/t$,  simple calculations then show
 \begin{align} I_{N}&\sim \binom{n}{n/2} (\Gamma_{4/\beta,n/2})^2 e^{-\frac{1}{2}nN(1-\frac{b^2}{t}-\log t)} t^{-\frac{1}{2}rn}  \prod_{k=1}^r \big(-(t-b^2)-\frac{2}{\beta}f^{2}_k\big)^{\frac{n}{2}}
       \nonumber\\
 &   \times  \big(t/\sqrt{2N(t-b^2)} \big)^{n+\frac{n}{\beta}(n-2)}\,  {{\phantom{k}}_0\mathcal{F}_0}^{\!(\beta/2)}\big(1^{(\frac{n}{2})},(-1)^{(\frac{n}{2})}; i y\big).\end{align}
 Together with \eqref{scalingproductmeanG}, upon noting the following identity  (cf. \cite[Corollary 2.3]{dl})
 \be {{\phantom{k}}_0\mathcal{F}_0}^{\!(\beta/2)}\big(1^{(\frac{n}{2})},(-1)^{(\frac{n}{2})}; i y\big)=e^{-i\sum_{j=1}^{n}y_{j}}\! {{\phantom{k}}_1F_1}^{\!(\beta/2)}\big(n/\beta;2n/\beta;2iy\big),\ee 
 where  the function $_{1}F_{1}$   is defined in \eqref{hfpq} of  Appendix \ref{A.A},  we thus complete the subcritical case (i).

\textbf{Critical case}: $t=b^2(1+\tau/\sqrt{N})^{-1}$.  We recall   the definition of the scaled variables \eqref{criscalingG} and let $\rho=N^{-1/4}$.
Changing variables $x_j=N^{-1/4}u_j$ in \eqref{IintegralG}, and noting the power series expansion
\be p(x_j)=-\log b +\frac{1}{2N}(\tau u^{2}_j+\frac{1}{2}u_{j}^{4})+\mathcal{O}(N^{-\frac{3}{2}}),\ee
 a simple calculation  shows
  \begin{multline} I_{N} \sim b^{n(N-r)} N^{-\frac{1}{2\beta}(n-1)n-\frac{1}{4}(m+1)n}(-i)^{rn}\prod_{j=m+1}^{r}\Big(\sqrt{\frac{2}{\beta}}f_j\Big)^n \\
   \int_{\mathbb{R}^n} \prod_{j=1}^{n}\Big(e^{-\frac{\tau }{2}u^{2}_j-\frac{1}{4}u_{j}^4}\prod_{k=1}^m (iu_j+\sigma_k)\Big)
     \ |\Delta_{n}(u)|^{4/\beta} {{\phantom{k}}_0\mathcal{F}_0}^{\!(\beta/2)}(iu;y) \,d^nu.\end{multline}

Combining \eqref{scalingproductmeanG} and the definition \eqref{Pearceyfunctions},  one  obtains
 \begin{equation}   K_{t/N,N}^{(G)}(s,f) \sim  \Psi_\mathrm{cri}\,  P_{n,m}^{(\beta/2)}(\tau;y;\sigma), \end{equation}
where $\Psi_\mathrm{cri}$  is given in  Appendix \ref{A.B}.

\textbf{Supcritical case}: $t<b^2$. In this case, $p(z)$ attains its global minimum over the   real axis exactly at $z=0$.    Let  $\rho=b/\sqrt{Nt(b^2-t)}$ and   change variables
$x_j=b\sqrt{t}u_j/\sqrt{N(b^2-t)}$ in \eqref{IintegralG}. Noting  the Taylor expansion
\begin{equation} p(x_j)=-\log b +\frac{1}{2N}u^{2}_j+\mathcal{O}(N^{-\frac{3}{2}}),\end{equation}
and recalling the scaled variables in \eqref{supscalingG},  we get the leading contribution 
 \begin{multline} I_{N} \sim (-i)^{rn}b^{n(N-r)}\Big(b\sqrt{t}/\sqrt{N(b^2-t)} \Big)^{\frac{2}{\beta}(n-1)n+(m+1)n}\prod_{j=m+1}^{r}\Big(\sqrt{\frac{2}{\beta}}f_j\Big)^n \\
 \int_{\mathbb{R}^n}
    \prod_{j=1}^{n}\Big(e^{-\frac{1}{2}u^{2}_j}\prod_{k=1}^m (iu_j+\sigma_k)\Big)
     \ |\Delta_{n}(u)|^{4/\beta} {{\phantom{k}}_0\mathcal{F}_0}^{\!(\beta/2)}(iu;y) \,d^n u.\end{multline}
 Substituting the above into  \eqref{scalingproductmeanG}, we thus get
 \begin{align} K_{t/N,N}^{(G)}&(s,f) \sim  \Psi_\mathrm{sup}\,e^{-\frac{t}{2b^2}\sum_{j=1}^{n} y_j^{2}}\,  G_{n,m}^{(\beta/2)}(y;\sigma). \end{align}

The proof of the theorem is thus completed.
\end{proof}

 We conclude this section with   a few  remarks  about  the assumption  \eqref{confluentcaseG}:  (i)  
This (or a similar) initial condition has been used  to  investigate the  Pearcey  phenomenon in random matrices with source and   non-intersecting  Brownian motions, see e.g.    \cite{av07,bk07, bh98,bh00,tw2006}.
(ii)  The techniques introduced   in the proof of Theorem \ref{zerotransition}  are applicable to a wider class of initial conditions, say,  $f_{r+1}=\cdots=f_{r+N_1}=b_1$ and  $f_{r+N_1}=\cdots=f_N=b_2$ with $N_1/N\to c\in (0,1)$.  In this case  the key  equation satisfied by saddle points, like  \eqref{cubiceqnG},   will become  more complicated and   it is believed that there is no new interesting phenomenon,  so we  don't proceed  further.

\section{Transition for the Laguerre ensemble} \label{sect:lag}
In this section  we also assume that all but finitely many source  eigenvalues, say $r$,  are   the same. A hard edge phase transition can be described as follows as the time $t$ changes. We first need to define two families of multivariate functions of Selberg type, for $a>0$ one is defined to be  \begin{multline} B_{n,m}^{(a,\alpha)}(y;\sigma;\tau):= \frac{1}{Z_{a,2/\alpha,n}}\int_{\mathbb{R}_{+}}^{n}\prod_{i=1}^{n}\Big(u_{i}^{a-1}e^{-\tau u_i-\frac{1}{2}u_{i}^2}\prod_{j=1}^m (u_i+\sigma_j) \Big) \\
 \times  \ |\Delta_{n}(u)|^{\frac{2}{\alpha}}\,  {{\phantom{k}}_0\mathcal{F}_1}^{\!(\alpha)}\big(a+\frac{1}{\alpha}(n-1);u;-y\big)\, d^{n}u, \label{criticalfunctions}\end{multline}
 while the other reads   \begin{multline} W_{n,m}^{(a,\alpha)}(y;\sigma):= \frac{1}{Z_{a,2/\alpha,n}} \int_{\mathbb{R}_{+}}^{n}   
 \prod_{i=1}^{n}\Big(u_{i}^{a-1}e^{- u_i+y_i}\prod_{j=1}^m (u_i+\frac{1}{\alpha}\sigma_j) \Big) \\
 \times  \ |\Delta_{n}(u)|^{\frac{2}{\alpha}}\,  {{\phantom{k}}_0\mathcal{F}_1}^{\!(\alpha)}\big(a+\frac{1}{\alpha}(n-1);u;-y\big)\, d^{n}u
  \label{laguerrefunctions}\end{multline}
It immediately follows from the duality relation \eqref{dualityLE} that   the latter satisfies
\begin{equation} W_{n,m}^{(a,\alpha)}(y;\sigma)=\alpha^{-mn}W_{m,n}^{(a/\alpha,1/\alpha)}(-\sigma;-y).\end{equation}
\begin{theorem} [Hard edge phase transition]\label{transition}
 Suppose that for a fixed integer $r\geq 0$,
 \be f_{r+1}=\cdots=f_N=\beta b/2, \qquad b>0.\label{confluentcase}\ee
With   \eqref{productmeanLE}, as $N\rightarrow \infty$ the following hold for any   $y_1, \ldots, y_n$ in a compact subset of $[0,\infty)$.

\begin{enumerate}[(i)]

\item    \label{subcriticallimit} For $t>b$, let  $f_1, \ldots, f_r$  be  in a compact subset of $[0,\infty)$ and set
     \be s_j=\frac{t^2}{t-b}\frac{y_j}{N^2}, \qquad j=1,\ldots,n.\label{subscaling}\ee
     We have \begin{equation}    \lim_{N \to \infty}  \frac{1}{\Phi_\mathrm{sub}}K_{t/N,N}^{(L)}(s;f)= {{\phantom{k}}_0F_1}^{\!(\beta/2)}\big(2(a+n-1)/\beta;-y\big). \end{equation}
     
\item 
  \label{criticallimit} For $t=b(1- \tau/\sqrt{N})$, let   $f_{m+1}, \ldots, f_{r}>0$ with  $0\leq m\leq r$ and  set
    \begin{equation} 
    s_j= \frac{b y_j}{N^{3/2}},  j=1,\ldots,n   \ \mbox{and}  \
  f_k=\frac{\beta b\sigma_k}{2\sqrt{N}},  \quad  k=1, \ldots, m   \label{criscaling} 
  \end{equation}
 with $\tau\in \mathbb{R}$ and $\sigma_1, \ldots, \sigma_m\geq 0$. We have
 \begin{equation}    \lim_{N \to \infty} \frac{1}{\Phi_\mathrm{cri}} K_{t/N,N}^{(L)}(s;f)= B_{n,m}^{(2a/\beta,\beta/2)}(\tau;y;\sigma).  \end{equation}
  
\item 
    \label{supercriticallimit} For $t<b$,  let  $f_{m+1}, \ldots, f_{r}>0$ with $1\leq m\leq r$ and  set
    \begin{equation} s_j= \frac{t(b-t)}{b}\frac{ y_j}{N},  j=1,\ldots,n  \  \mbox{and} \
  f_k=\frac{bt}{b-t}\frac{\sigma_j}{N},   k=1, \ldots, m   \label{supscaling}\end{equation}
 with   $\sigma_1, \ldots, \sigma_m\geq 0$. We have
 \begin{equation}   \lim_{N \to \infty}  \frac{1}{\Phi_\mathrm{sup}} K_{t/N,N}^{(L)}(s;f) e^{(t/b)\sum_{i=1}^n y_i}= \, W_{n,m}^{(2a/\beta,\beta/2)}(y;\sigma).  \end{equation}
\end{enumerate}
Here $\Phi_\mathrm{sub}, \Phi_\mathrm{cri}$  and $\Phi_\mathrm{sup}$ are constants given in  Appendix \ref{appendix}.
 \end{theorem}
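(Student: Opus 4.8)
The plan is to run the method of steepest descent as in the proof of Theorem~\ref{zerotransition}, now based on the Laguerre duality \eqref{dualityLE}. Writing $s_j-\tfrac2\beta x_k=-\big((-s_j)+\tfrac2\beta x_k\big)$ and invoking \eqref{dualityLE} under the confluent assumption \eqref{confluentcase}, in which $\tfrac2\beta f_k=b$ for $r+1\le k\le N$, the confluent block factors out of the dual average as $\prod_{k=r+1}^{N}(x_j+\tfrac2\beta f_k)=(x_j+b)^{N-r}$, and absorbing $(x_j+b)^{N}$ into the Laguerre weight $e^{-(N/t)x_j}$ of the dual ensemble $\LE_{2a/\beta,4/\beta,t/N,n}(x;-s)$ produces the factor $e^{-Np(x_j)}$ with
\be
p(z)=\frac{z}{t}-\log(z+b),\qquad p'(z)=\frac1t-\frac1{z+b},\qquad p''(z)=\frac1{(z+b)^2}>0.
\ee
Hence $K^{(L)}_{t/N,N}(s;f)$ equals an explicit prefactor --- containing the sign $(-1)^{nN}$, powers of $N/t$, the normalisation $Z_{2a/\beta,4/\beta,n}^{-1}$ and the dual-source factor $e^{(N/t)\sum_i s_i}$ --- times the $n$-fold integral over $\mathbb R_+^n$
\[
I_N=\int_{\mathbb R_+^n}e^{-N\sum_{j}p(x_j)}\prod_{j=1}^{n}\Big(x_j^{2a/\beta-1}(x_j+b)^{-r}\prod_{k=1}^{r}(x_j+\tfrac2\beta f_k)\Big)|\Delta_n(x)|^{4/\beta}\,{}_0\mathcal{F}_1^{(\beta/2)}\big(\tfrac{2(a+n-1)}{\beta};\tfrac Nt x;-\tfrac Nt s\big)\,d^nx.
\]
The function $p$ has the single saddle $z_\ast=t-b$, which lies inside $\mathbb R_+$ when $t>b$, coincides with the hard edge $z=0$ when $t=b$, and falls outside $\mathbb R_+$ when $t<b$; in the last case $p'>0$ on $\mathbb R_+$, so the minimum of $p$ over $\overline{\mathbb R_+}$ is attained at $z=0$. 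These three possibilities are exactly the subcritical, critical and supercritical regimes, and throughout we of course assume $a>0$ (as in the definitions of $B$ and $W$).

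In the \emph{subcritical} regime all $n$ variables concentrate at the interior minimum $z_\ast$; setting $x_j=z_\ast+w_j/\sqrt N$ and applying \cite[Corollary~3.12]{dl}, the $w$-integral becomes a Gaussian $\beta$-Selberg integral (contributing a $\Gamma_{4/\beta,n}$-type constant, up to a power of $p''(z_\ast)=t^{-2}$), the factors $x_j^{2a/\beta-1}$, $(x_j+b)^{-r}$ and $\prod_{k=1}^{r}(x_j+\tfrac2\beta f_k)$ tend to their values at $z_\ast$ (recall $z_\ast+b=t$), and, by the homogeneity ${}_0\mathcal{F}_1^{(\alpha)}(c;\lambda u;v)={}_0\mathcal{F}_1^{(\alpha)}(c;u;\lambda v)$, the elementary specialisation ${}_0\mathcal{F}_1^{(\alpha)}(c;1^{(n)};v)={}_0F_1^{(\alpha)}(c;v)$, and the scaling \eqref{subscaling}, the hypergeometric factor collapses to ${}_0F_1^{(\beta/2)}\big(\tfrac{2(a+n-1)}{\beta};-y\big)$; the surviving constants are collected into $\Phi_{\mathrm{sub}}$. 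In the \emph{critical} regime $t=b(1-\tau/\sqrt N)$ I would rescale $x_j=b\,u_j/\sqrt N$; since $b/t=1+\tau/\sqrt N+O(1/N)$ one finds $p(bu/\sqrt N)=-\log b+N^{-1}\big(\tau u+\tfrac12u^2\big)+O(N^{-3/2})$, so that $e^{-Np(x_j)}=b^{N}e^{-\tau u_j-\frac12u_j^2}(1+o(1))$; the scaled perturbations $f_k=\tfrac{\beta b}{2\sqrt N}\sigma_k$ ($1\le k\le m$ in \eqref{criscaling}) turn $\prod_{k=1}^{m}(x_j+\tfrac2\beta f_k)$ into $(b/\sqrt N)^m\prod_{k=1}^{m}(u_j+\sigma_k)$, the fixed $f_k$ with $m<k\le r$ contribute only constants, $(x_j+b)^{-r}\to b^{-r}$, and, using \eqref{criscaling} together with homogeneity and $t\to b$, the argument of ${}_0\mathcal{F}_1^{(\beta/2)}$ stays of order one and tends to ${}_0\mathcal{F}_1^{(\beta/2)}\big(\tfrac{2(a+n-1)}{\beta};u;-y\big)$; dominated convergence then identifies the limit with $B^{(2a/\beta,\beta/2)}_{n,m}(\tau;y;\sigma)$.

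The \emph{supercritical} regime $t<b$ is handled the same way: the minimum of $p$ being pinned at the hard edge, I would rescale $x_j=\tfrac{bt}{b-t}\tfrac{u_j}{N}$, whence $Np(x_j)=-N\log b+u_j+o(1)$ and the weight $e^{-Np(x_j)}$ becomes the plain Laguerre weight $e^{-u_j}$; the scaled perturbations $f_k=\tfrac{bt}{b-t}\tfrac{\sigma_k}{N}$ ($1\le k\le m$) produce $\prod_{k=1}^{m}(u_j+\tfrac2\beta\sigma_k)$, and the hypergeometric argument again stabilises --- now exactly --- to ${}_0\mathcal{F}_1^{(\beta/2)}\big(\tfrac{2(a+n-1)}{\beta};u;-y\big)$. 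The new feature, absent from the other two regimes, is that the dual-source factor $e^{(N/t)\sum_i s_i}$ does not tend to $1$: under \eqref{supscaling} each factor equals $e^{(1-t/b)y_i}=e^{y_i}e^{-(t/b)y_i}$, and taking the product over $i$ simultaneously supplies the trivial exponential $e^{\sum_i y_i}$ built into the definition \eqref{laguerrefunctions} of $W$ and the compensating factor $e^{(t/b)\sum_i y_i}$ that appears on the left-hand side of the claim; dominated convergence then gives the limit $W^{(2a/\beta,\beta/2)}_{n,m}(y;\sigma)$.

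The conceptual skeleton is therefore identical to the Gaussian case, so most of the effort is bookkeeping. The one genuinely delicate point is the critical regime, where the saddle of $p$ merges simultaneously with the hard edge $z=0$ and with the integrable singularity $x^{2a/\beta-1}$ of the Laguerre weight: one has to calibrate the three scales of $x_j$, $s_j$ and $f_k$ so that the local linear-plus-quadratic part of $p$, the factor $u^{a-1}$, the rank-$m$ factor $\prod(u_i+\sigma_k)$ and the argument of ${}_0\mathcal{F}_1^{(\beta/2)}$ all stabilise at the same rate, and one must justify interchanging the limit and the integral for $y$ in a compact subset of $[0,\infty)$ --- the crucial estimate being that the a priori unbounded hypergeometric factor is dominated by the Gaussian part $e^{-\frac12\sum u_j^2}$ (critical) respectively the exponential part $e^{-\sum u_j}$ (supercritical) of the weight, together with the harmless decay of $e^{-Np}$ on the tails of $I_N$ away from the concentration region. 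The remainder --- checking that the hypotheses of \cite[Corollary~3.12]{dl} hold in the subcritical case, evaluating $\Phi_{\mathrm{sub}},\Phi_{\mathrm{cri}},\Phi_{\mathrm{sup}}$ explicitly (Mehta-integral constants, powers of $N$, $b$ and $t$ thrown off by the duality and the various rescalings), and confirming that the $f_k$ with $m<k\le r$ really contribute only multiplicative constants --- is routine but, as the authors stress, needs to be carried out with care.
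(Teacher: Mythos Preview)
Your proposal is correct and follows essentially the same route as the paper: duality \eqref{dualityLE} to an $n$-fold integral with phase $p(z)=z/t-\log(z+b)$, the single saddle $z_\ast=t-b$, and then Laplace-type analysis in the interior ($t>b$), at the hard edge with $x_j=O(N^{-1/2})$ ($t=b(1-\tau/\sqrt N)$), and at the hard edge with $x_j=O(N^{-1})$ ($t<b$), with the exponential $e^{(N/t)\sum s_i}$ supplying the extra factor in the supercritical case. One small correction: in the subcritical regime there is a \emph{single} interior saddle, so the relevant asymptotic input from \cite{dl} is Corollary~3.11 (single-peak Laplace for Selberg-type integrals), not Corollary~3.12 (which handles the two-peak splitting used in the Gaussian subcritical case).
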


\begin{proof} Recalling the assumption \eqref{confluentcase}, by the duality formula  \eqref{dualityLE}  we find 
\begin{align} \label{scalingproductmean}
&K_{t/N,N}^{(L)}(s;f)=\frac{1}{Z_{2a/\beta,4/\beta,n}}(-1)^{nN}  (N/t)^{2n(a+ n-1)/\beta} e^{(N/t)\sum_{i=1}^{n}s_i} I_N,
\end{align}
where  \begin{align}
 I_{N}=  \int_{\mathbb{R}_{+}^n} & e^{-N \sum_{i=1}^n p(x_i)} \prod_{i=1}^{n}\Big(x_{i}^{\frac{2a}{\beta}-1}(x_i+b)^{-r}\prod_{j=1}^r (x_i+\frac{2}{\beta}f_j)\Big)  \nonumber\\
  &\times  \ |\Delta_{n}(x)|^{4/\beta}\,  {{\phantom{k}}_0\mathcal{F}_1}^{\!(\beta/2)}(c; Nx/t;-Ns/t)  \,d^nx. \label{Iintegral}
\end{align}
Here $c:=2(a+n-1)/\beta$ and
\begin{equation} p(z)=\frac{z}{t}-\log(z+b). \label{pfunction}\end{equation}

Applying now the method of steepest descent, we see from \eqref{pfunction}  that   the   saddle point equation  reads 
\be p'(z)=\frac{1}{t}- \frac{1}{z+b}=0,\ee
 and further implies the saddle point $z_0=t-b$.  Next, we establish the hard-edge limits in three different  cases:   (i) subcritical regime  of $t>b>0$, (ii) critical regime of $t=b>0$  and (iii) supercritical regime of $0<t<b$.

 \textbf{Subcritical case}: $z_0>0$. In this case $p(z_0)=1-bt^{-1}-\log t$ and $p''(z_0)=t^{-2}>0$. Moreover, $p(z)$ attains the minimum over the positive real axis at $z=z_0$.   Recalling the scaled variables in \eqref{subscaling},  by \cite[Corollary 3.11]{dl} we conclude   that as $N\rightarrow \infty$ the leading term of the integral $I_N$ comes from the neighborhood of $z_0$.  Noting the change of variables \eqref{subscaling}, we have
 \begin{align} I_{N}&\sim  
\int_{\mathbb{R}^n}  e^{-nNp(z_0)-N \frac{p''(z_0)}{2}\sum_{i=1}^n  (x_i-z_0)^{2}} \big(z_{0}^{\frac{2a}{\beta}-1}(z_0+b)^{-r}\big)^{n}    \nonumber\\
 & \quad \times \prod_{j=1}^r (z_0+\frac{2}{\beta}f_j)^{n}\,  \ |\Delta_{n}(x)|^{4/\beta}   {{\phantom{k}}_0F_1}^{\!(\beta/2)}\big(c; -\frac{z_0}{t-b} y\big)  \,d^nx\\
 &=e^{-nN(1-\frac{b}{t}-\log t)}(t-b)^{(\frac{2a}{\beta}-1)n}t^{-rn} \prod_{j=1}^r (t-b+\frac{2}{\beta}f_j)^{n} \nonumber\\
 &\quad \times (t/\sqrt{N})^{\frac{2}{\beta}n(n-1)+n}  \,\Gamma_{4/\beta,n}\! {{\phantom{k}}_0F_1}^{\!(\beta/2)}(c; -y),\end{align}
 where the constant $\Gamma_{4/\beta,n}$ is given in    \eqref{constgamma}.

 Together with \eqref{scalingproductmean} we thus get

 \be \label{limit0F1} K_{t/N,N}^{(L)}(s,f) \sim \Phi_\mathrm{sub}\! {{\phantom{k}}_0F_1}^{\!(\beta/2)}(c; -y). \ee

\textbf{Critical case}: $z_0=0$.  Recall  the double scaling $t=b(1- \tau/\sqrt{N})$ and the scaled variables in \eqref{criscaling}.    After the change of variables $x_i=u_i /\sqrt{N}$ in \eqref{Iintegral}, noting the Taylor expansion
\be p(x_i)=-\log b +\frac{1}{N}(\tau u_i+\frac{1}{2}u_{i}^{2})+\mathcal{O}(N^{-\frac{3}{2}}),\ee
 a simple calculation   shows
  \begin{multline} I_{N} \sim b^{n(N-r)}\big(\frac{1}{\sqrt{N}})^{\frac{2}{\beta}(a+n-1)n+mn}\prod_{j=m+1}^{r}(\frac{2}{\beta}f_j)^n\, \int_{\mathbb{R}_{+}^n}  {{\phantom{k}}_0\mathcal{F}_1}^{\!(\beta/2)}(c;u;-y)  \\
   \times  \prod_{i=1}^{n}\Big(u_{i}^{\frac{2a}{\beta}-1}e^{-\tau u_i-\frac{1}{2}u_{i}^2}\prod_{j=1}^m (u_i+\sigma_j)\Big)
     \ |\Delta_{n}(u)|^{4/\beta}   d^n u.\end{multline}

Combining \eqref{scalingproductmean} and \eqref{criticalfunctions},  we obtain
 \begin{equation}   K_{t/N,N}^{(L)}(s,f) \sim  \Phi_\mathrm{cri}\,  B_{n,m}^{(2a/\beta,\beta/2)}(\tau;y;\sigma).\end{equation}

\textbf{Supercritical case}: $z_0<0$. In this case $p(z)$ attains its minimum over the positive real axis at $z=0$.     Change variables $x_i= btu_i/(N(b-t))$ in \eqref{Iintegral} and note  the Taylor expansion
\be p(x_i)=-\log b +N^{-1}u_i+\mathcal{O}(N^{-2}).\ee
Recalling now the scaled variables in \eqref{supscaling},  we have
 \begin{multline} I_{N} \sim b^{n(N-r)}\Big(\frac{bt}{N(b-t)}\Big)^{\frac{2}{\beta}(a+n-1)n+mn}\prod_{j=m+1}^{r}\big(\frac{2}{\beta}f_j\big)^n\,  \int_{\mathbb{R}_{+}^n} \prod_{i=1}^{n} u_{i}^{\frac{2a}{\beta}-1}   \\
   \quad \times  e^{- \sum_{i=1}^n u_i}\prod_{i=1}^{n} \prod_{j=1}^m \big(u_i+\frac{2}{\beta}\sigma_j\big)
     \ |\Delta_{n}(u)|^{4/\beta} {{\phantom{k}}_0\mathcal{F}_1}^{\!(\beta/2)}(c;u;-y)\,  d^{n}u.\end{multline}

 Substituting the above into  \eqref{scalingproductmean}, we thus get
 \begin{align} K_{\beta,N}^{(L)}&(s,f;t/N) \sim (-1)^{nN} b^{n(N-r)}\Big(\frac{b}{b-t}\Big)^{\frac{2}{\beta}(a+n-1)n+mn}\Big(\frac{t}{N}\Big)^{mn} \prod_{j=m+1}^{r}\big(\frac{2}{\beta}f_j\big)^n \nonumber\\
 & \times \frac{1}{Z_{2a/\beta,2/\beta,n}} e^{(1-\frac{t}{b})\sum_{i=1}^n y_i}\int_{0}^{\infty}\cdots
\int_{0}^{\infty}  \prod_{i=1}^{n}\Big(u_{i}^{\frac{2a}{\beta}-1}e^{-  u_i}\prod_{j=1}^m (u_i+\frac{2}{\beta}\sigma_j)\Big) \nonumber\\
    & \times
        \ |\Delta_{n}(u)|^{4/\beta} {{\phantom{k}}_0\mathcal{F}_1}^{\!(\beta/2)}(c;u;-y)\,  du_1 \cdots du_n.\end{align}
  From this, together with the duality formula \eqref{dualityLE} and the definition \eqref{laguerrefunctions},  we have the desired  result
     \begin{equation}   K_{t/N,N}^{(L)}(s,f) \sim   \Phi_\mathrm{sup} \  e^{-(t/b)\sum_{i=1}^n y_i}\, W_{n,m}^{(2a/\beta,\beta/2)}(y;\sigma).\end{equation}
\end{proof}
\begin{remark} Theorem \ref{transition} (i) also holds even when   $b=0$,  which can be proved  by following almost the same procedure.

\end{remark}

 \section{Appendices} 
  
         \subsection{Constants} \label{A.B}
       \label{appendix}

The normalization constants in the Gaussian and Laguerre $\beta$-ensembles with a source, being independent of the source and $t$, are the normalizations
corresponding to the case $f = (0,\dots,0)$ and $t=1$. The generalized hypergeometric functions are then equal to unity, and we can
read off from \cite[Eq.~(1.160)]{forrester} and  \cite[eqn (3.128)]{forrester} that
\be \label{constgamma}
 \Gamma_{\beta,n}=(2\pi)^{n/2}\prod_{j=1}^n\frac{\Gamma(1+j\beta/2)}{\Gamma(1+\beta/2)}.
\ee
and
 \be Z_{a,\beta,N}=\prod_{j=0}^{N-1}\frac{\Gamma(1+(1+j)\beta/2)\Gamma(a+j\beta/2)}{\Gamma(1+\beta/2)} \label{Lconst}.\ee
 A systematic way to obtain these evaluations is to use Selberg integral theory; see \cite[\S 4.7]{forrester}.

The constants associated with the Gaussian $\beta$-ensemble with a source appearing in
Theorem \ref{zerotransition} are
\begin{align}  \Psi_\mathrm{sub}&=    (-i)^{n(N-r)}\big(2(t-b^2)\big) ^{-\frac{n}{2}-\frac{n}{\beta}(\frac{n}{2}-1)-\frac{n}{2}} t^{-\frac{n}{\beta}-\frac{1}{2}(r-1)n}
       \nonumber\\
 &   \quad \times   e^{-\frac{1}{2}nN(1-\frac{b^2}{t}-\log t)} N^{\frac{n^2}{2\beta}} \prod_{j=1}^r \big(t-b^2+\frac{2}{\beta}f^{2}_j\big)^{\frac{n}{2}},
 \end{align}
\begin{align} 
 \gamma_{m}(\beta'\!)=\binom{2m}{m}\prod_{j=1}^m\frac{\Gamma(1+\beta'\!j/2)}{\Gamma(1+\beta'\!(m+j)/2)}
 \end{align}
\begin{align}  \Psi_\mathrm{crt} =  (-i)^{n(r+N)}b^{n(N-r)}N^{\frac{1}{4}(1-m)n+\frac{1}{2\beta}(n-1)n}\prod_{j=m+1}^{r}\big(\sqrt{\frac{2}{\beta}}f_j\big)^n,\end{align}
 and     \begin{align}  \Psi_\mathrm{sup} =      (-i)^{n(N+r)}b^{n(N-r)}\big(b/\sqrt{b^2-t} \big)^{\frac{2}{\beta}(n-1)n+(m+1)n} \nonumber\\ \times \big(t/N \big)^{\frac{mn}{2}}
  \prod_{j=m+1}^{r}\big(\sqrt{\frac{2}{\beta}}f_j\big)^n.\end{align}
The constants  associated with the Laguerre $\beta$-ensemble with a source appearing in Theorem \ref{transition} are
 \begin{multline} \Phi_\mathrm{sub}=\frac{\Gamma_{4/\beta,n}}{Z_{2a/\beta,4/\beta,n}}  \Big(1-\frac{b}{t}\Big)^{(\frac{2a}{\beta}-1)n}   \prod_{j=1}^r \big(1-\frac{b}{t}+\frac{2}{t\beta}f_j\big)^{n}\\
\times (-1)^{nN} e^{-nN(1-\frac{b}{t}-\log t)} N^{\frac{n(n-1)}{\beta}+(\frac{2a}{\beta}-\frac{1}{2})n},\end{multline}
\be \Phi_\mathrm{cri}=(-1)^{nN}b^{n(N-r)} b^{-\frac{2n}{\beta}(a+n-1)} N^{\frac{(a+n-1)n}{\beta}-\frac{nm}{2}} \prod_{j=m+1}^{r}(\frac{2}{\beta}f_j)^n,\ee
and
\be \Phi_\mathrm{sup}=(-1)^{nN}b^{n(N-r)}  \Big(\frac{b}{b-t}\Big)^{\frac{2}{\beta}(a+n-1)n+mn} \Big(\frac{t}{N}\Big)^{mn} \prod_{j=m+1}^{r}\big(\frac{2}{\beta}f_j\big)^n.\ee

    
      \subsection{Jack polynomials and hypergeometric functions} 
\label{hypergeometric}\label{A.A}
This appendix    provides a brief review of  Jack polynomials  and   hypergeometric functions \cite{stanley,yan}; see also \cite[Chapter 12]{forrester}.

A partition $\kappa = (\kappa_1, \kappa_2,\ldots,\kappa_i,\ldots,)$ is a sequence of non-negative integers $\kappa_i$ such that
\begin{equation*}
    \kappa_1\geq\kappa_2\geq\cdots\geq\kappa_i\geq\cdots
\end{equation*}
and only a finite number of the terms $\kappa_i$ are non-zero. The number of non-zero terms is referred to as the length of $\kappa$, and is denoted $\ell(\kappa)$. We shall not distinguish between two partitions that differ only by a string of zeros. The weight of a partition $\kappa$ is the sum
\begin{equation*}
    |\kappa|:= \kappa_1+\kappa_2+\cdots
\end{equation*}
of its parts, and its diagram is the set of points $(i,j)\in\mathbb{N}^2$ such that $1\leq j\leq\kappa_i$.
Reflection in the diagonal produces the conjugate partition
$\kappa^\prime=(\kappa_1',\kappa_2',\ldots)$.
The set of all partitions of a given weight is  partially ordered
by the dominance order: $\kappa\leq \sigma $ if and only if $\sum_{i=1}^k\kappa_i\leq \sum_{i=1}^k \sigma_i$ for all $k$.

Let $\Lambda_n(x)$ be the algebra of symmetric polynomials in $n$ variables $x_1,\ldots,x_n$ with coefficients in the field $\mathbb{F}=\mathbb{Q}(\alpha)$, which is  the field of  rational functions in the parameter $\alpha>0$.
It is invariant under the action of  homogeneous differential operators related to the Calogero-Sutherland models \cite{bf}:
$$ E_k=\sum_{i=1}^n x_i^k\frac{\partial}{\partial x_i}, \ D_k=\sum_{i=1}^n x_i^k\frac{\partial^2}{\partial x_i^2}+\frac{2}{\alpha}\sum_{1\leq i\neq j \leq n}\frac{x_i^k}{x_i-x_j}\frac{\partial}{\partial x_i},\  k=0,1,2,\ldots .
$$
The operators $E_1$ and $D_2$   can be used to define the Jack polynomials.  Indeed, for each partition $\kappa$, there exists a unique symmetric polynomial $P^{(\alpha)}_\kappa(x)$ that satisfies the following two  conditions \cite{stanley}:
\begin{align} \label{Jacktriang}(1)\qquad&P^{(\alpha)}_\kappa(x)=m_\kappa(x)+\sum_{\mu<\kappa}c_{\kappa\mu}m_\mu(x)&\text{(triangularity)}\\
\label{Jackeigen}(2)\qquad &\Big(D_2-\frac{2}{\alpha}(n-1)E_1\Big)P^{(\alpha)}_\kappa(x)=\epsilon_\kappa P^{(\alpha)}_\kappa(x)&\text{(eigenfunction)}\end{align}
where  $\epsilon_\kappa, c_{\kappa\mu} \in \mathbb{F}$.  Because of the tiangularity condition, $\Lambda_n(x)$ is also equal to the span over $\mathbb{F}$ of all Jack polynomials  $P^{(\alpha)}_\kappa(x)$, with $|\kappa|\leq n$.

For    $(i,j) \in \kappa$, let
$
    a_\kappa(i,j) = \kappa_i-j$ and $l_\kappa(i,j) = \kappa^\prime_j-i$. Introduce the hook-length of $\kappa$    defined by
\begin{equation*} \label{defhook}
    h_{\kappa}^{(\alpha)}=\prod_{(i,j)\in\kappa}\Big(1+a_\kappa(i,j)+\tfrac{1}{\alpha}l_\kappa(i,j)\Big),
\end{equation*}
and  the   $\alpha$-deformation of the Pochhammer symbol by
\begin{equation*}\label{defpochhammer}
    [x]^{(\alpha)}_\kappa = \prod_{1\leq i\leq \ell(\kappa)}\Big(x-\tfrac{i-1}{\alpha}\Big)_{\kappa_i}.
\end{equation*}
Here  $(x)_j\equiv x(x+1)\cdots(x+j-1)$. We   now turn  to   the  precise definition of the hypergeometric series associated with Jack polynomials, see e.g. \cite{yan}.
Given  $p,q\in\mathbb{N}_0=\{0, 1, 2,\ldots\}$,  let $a_1,\ldots,a_p, b_1,\ldots,b_q$ be complex numbers such that $(i-1)/\alpha-b_j\notin\mathbb{N}_0$ for all $i\in\mathbb{N}_0$.
The $(p,q)$-type hypergeometric series in two sets of $n$ variables  $x=(x_1,\ldots,x_n)$ and  $y=(y_1,\ldots,y_n)$  is defined as 
\begin{align}\label{hfpq2}
    {\phantom{j}}_p\mathcal{F}^{(\alpha)}_{q}&(a_1,\ldots,a_p;b_1,\ldots,b_q;x;y)
                =  \sum_{k=0}^{\infty}\, \sum_{\ell(\kappa)\leq n,|\kappa|=k} \frac{1}{h_{\kappa}^{(\alpha)}}\frac{\lbrack a_1\rbrack^{(\alpha)}_\kappa\cdots\lbrack a_p\rbrack^{(\alpha)}_\kappa}{\lbrack b_1\rbrack^{(\alpha)}_\kappa
    \cdots\lbrack b_q\rbrack^{(\alpha)}_\kappa}\frac{P_{\kappa}^{(\alpha)}(x)P_{\kappa}^{(\alpha)}(y)}{P_{\kappa}^{(\alpha)}(1^{(n)})},
\end{align}
where  the shorthand notation $1^{(n)}$ stands for $ 1,\ldots,1 $  with  $n$ times.
In particular, when   $y_1=\cdots=y_n=1$, it reduces to the hypergeometric series
\begin{equation}\label{hfpq}
  {\phantom{j}}_p F^{(\alpha)}_{q}(a_1,\ldots,a_p;b_1,\ldots,b_q;x) = \sum_{k=0}^{\infty}\sum_{|\kappa|=k} \frac{1}{h_{\kappa}^{(\alpha)}}\frac{\lbrack a_1\rbrack^{(\alpha)}_\kappa\cdots\lbrack a_p\rbrack^{(\alpha)}_\kappa}{\lbrack b_1\rbrack^{(\alpha)}_\kappa
    \cdots\lbrack b_q\rbrack^{(\alpha)}_\kappa}P_{\kappa}^{(\alpha)}(x). 
\end{equation}
 Note that when $p\leq q$, the   series \eqref{hfpq}  converges absolutely for all $x\in\mathbb{C}^n$.
 
 Setting $x = 0^{(n)}$ in either (\ref{hfpq2}) we see that the only term contributing is $\kappa$ having all
 parts equal to zero, which shows
\begin{align}\label{hfpq4}
    {\phantom{j}}_p\mathcal{F}^{(\alpha)}_{q}(a_1,\ldots,a_p;b_1,\ldots,b_q;0^{(n)};y)
                =  1.
                \end{align}
                Less immediate but also of interest is the simplification of (\ref{hfpq2}) in the case $y = y^{(n)}$.
                Thus (see e.g.~\cite[Eq.~(13.63)]{forrester})
\begin{align}\label{hfpq5}
  {\phantom{j}}_0\mathcal{F}^{(\alpha)}_{0}(x; y^{(n)}) =   {\phantom{j}}_0 F^{(\alpha)}_{0}(yx_1,\dots,yx_n) = \exp\! \Big ( y \sum_{i=1}^n x_i \Big ).
  \end{align}

\begin{acknow}
 
The authors were   supported by Australian Research Council 
(Grant No. DP170102028), 
  the National Natural Science Foundation of China   (Grant No. 11771417) and the Youth Innovation Promotion Association CAS   (Grant No. 2017491)   
\end{acknow}


\begin{thebibliography}{99}

 

 
\bibitem{av07}  Adler, M.,  van Moerbeke, P.: PDEs for the Gaussian Ensemble with External Source and the Pearcey Distribution. 
{\it Commun. Pur. Appl. Math.}, {\bf 60}, 1261--1292 (2007)
\bibitem{ADM}  Adler, M.,  Del\'epine, J., van Moerbeke, P.: Dyson's nonintersecting Brownian motions with a few outliers. {\it Commun.~Pur. Appl.  Math.}, {\bf 62}, 334--395  (2009)
\bibitem{af}  Akemann, G.,    Fyodorov, Y.V.: Universal random matrix correlations of ratios of characteristic polynomials at the spectral edges. {\it Nucl.\ Phys.\ B}, {\bf  664}, 457--476 (2003)


\bibitem{abk2005} Aptekarev, A. I., Bleher, P. M., Kuijlaars, A. B. J.: Large $n$ limit of Gaussian random matrices with external source II. {\it Commun. Math. Phys.},  {\bf 259}, no. 2, 367--389 (2005)


 
\bibitem{AGZ} Anderson, G.W., Guionnet,  A.,  Zeitouni,  O.: {An introduction to random matrices}. Cambridge University Press,  2010.


 

 \bibitem{bbp}   Baik, J., Ben Arous, G.,  P\'{e}ch\'{e}, S.: {Phase transition of the largest eigenvalue for non-null complex sample covariance matrices}.  {\it Ann. Prob.},  {\bf33}, no. 5, 1643--1697  (2005)
\bibitem{bds}  Baik,  J.,  Deift,  P.,  Strahov, E.:  {Products and ratios of characteristic polynomials of random Hermitian matrices}.  {\it J.\ Math.\ Phys.}, {\bf 44}, 3657--3670 (2003)


\bibitem{BF97f} 
Baker, T.H., Forrester, P.J.: Finite-$N$ fluctuation formulas for random
  matrices.  {\it J. Stat. Phys.},  {\bf 88}, 1371--1386 (1997)

\bibitem{bf} Baker, T.H., Forrester, P.J.: {The Calogero-Sutherland model and generalized classical polynomials}. {\it Commun.~Math.~Phys.}, {\bf188}, 175--216 (1997)

\bibitem{bk2004}Bleher,  P. M., Kuijlaars, A.B.J.:  {Large $n$ limit of Gaussian random matrices with external
source I}. {\it Commun. Math. Phys.},   {\bf252}, no. 1-3, 43--76  (2004)


\bibitem{BK05} Bleher,  P.M.,  Kuijlaars,  A.B.J.: Integral representations for multiple Hermite and multiple Laguerre polynomials.  {\it Ann.
Inst. Fourier}, {\bf 55},  2001--2014 (2005)


\bibitem{bk07} Bleher,  P. M.,   Kuijlaars, A.B.J.:  {Large $n$ limit of Gaussian random matrices with external
source III: Double Scaling Limit}. {\it Commun. Math. Phys.},  {\bf 270}, no. 2, 481--517 (2007)

  \bibitem{bv1}  Bloemendal,   A.,   Vir\'{a}g,  B.: {Limits of spiked random matrices I}. {\it Probab.~Theory Relat.~Fields},  {\bf 156}, 795--825 (2013)
   \bibitem{bv2} Bloemendal,   A.,   Vir\'{a}g,  B., {Limits of spiked random matrices II}.  {\it Ann. Probab.}, {\bf 44}, 2726--2769  (2016)
   
   \bibitem{BGS84} Bohigas,  O., Giannoni,  M.J., Schmit, C.:  Characterization of chaotic
  quantum spectra and universality of level fluctuation laws.  {\it Phys. Rev. Lett.}, {\bf 52}, 1--4 (1984)
 
\bibitem{Bru} Bru, M.F.: Wishart processes.   {\it J. Theor. Probab.}, {\bf  4}, 725--751 (1991)

 
 
\bibitem{bs}  Borodin,  A., Strahov, E.: {Averages of Characteristic Polynomials in Random Matrix Theory}.  {\it Commun.\ Pur. Appl.\ Math.}, {\bf 59},  161--253  (2006)



 
 \bibitem{bh98}   Br\'{e}zin, E., Hikami, S.: {Universal singularity at the closure of a gap in a random matrix theory}. {\it Phys.
Rev. E},  {\bf 57}, 4140--4149 (1998)
   

\bibitem{bh} Br\'{e}zin, E., Hikami, S.:  { Characteristic Polynomials of Random Matrices}. {\it Commun.\ Math.\  Phys.}, {\bf 214}, 111--135 (2000)
 \bibitem{bh00}   Br\'{e}zin, E., Hikami, S.:   {Characteristic polynomials of random matrices at edge singularities}.  {\it Phys. Rev. E}, {\bf  62}, no. 3, 3558--3567  (2000)
\bibitem{bh01} Br\'{e}zin, E., Hikami, S.:   {Characteristic polynomials of real symmetric random matrices}.  {\it Commun. Math. Phys.},  {\bf 223}, 363--382 (2001)
 \bibitem{bh4}     Br\'{e}zin, E., Hikami, S.: {Intersection theory from duality and replica}. {\it Commun. Math. Phys.},  {\bf 283}, 507--521 (2008)
 
 
 \bibitem{Ch91}
         Chan, T.:  The Wigner semi-circle law and eigenvalues of matrix-valued diffusions.  {\it Probab. Theory Relat. Fields},  {\bf 93},   249-272 (1991)
 
 \bibitem{des}  Desrosiers, P.:  {Duality in random matrix ensembles for all $\beta$}.  {\it Nucl.\ Phys.\ B}, {\bf 817}, 224--251  (2009)
 

\bibitem{df1b}   Desrosiers, P., Forrester,  P. J.: A note on biorthogonal ensembles.  {\it J.\  Approx.\ Theory},  {\bf 152}, 167--187  (2008) 


    
 \bibitem{dl} Desrosiers,  P.,  Liu, D.-Z.: {Asymptotics for products of characteristic polynomials in classical $\beta$-ensembles}. {\it Constr. Approx.},  {\bf 39}, no. 2, 273--322 (2014)

     \bibitem{dl2}  Desrosiers,  P.,  Liu, D.-Z.: {Scaling limits of correlations of characteristic polynomials for the Gaussian $\beta$-ensemble with external source}. {\it  Int.  Math.  Res. Notices {\bf 2015}}, no.12,  3751-3781 (2015)
     
  \bibitem{dy} 
Dyson, F. J.: A Brownian-motion model for the eigenvalues of a random matrix. 
{\it J. Math.Phys.}, {\bf 3},  1191-1198  (1962)

\bibitem{Dy62}
Dyson, F. J.: Statistical theory of energy levels of complex systems II. {\it J. Math. Phys.}, {\bf 3}, 157--165 (1962)

  \bibitem{dy96} 
Dyson, F. J.:  Selected papers of Freeman Dyson with commentary. American Mathematical Society, 1996

\bibitem{DEKV13}
Dubbs, A., Edelman, A., Koev, P., Venkataramana, P.: {The beta-Wishart ensemble}.  {\it J.~Math.~Phys.}, {\bf 54}, 083507 (2013)

 \bibitem{EY11} 
Erd\"{o}s, L., Schlein, B.,  Yau, H.-T.: Universality of random matrices and local relaxation flow. {\it  Invent.
Math.}, {\bf 185}, 75--119 (2011)

  \bibitem{EY17} 
Erd\"os, L.,  Yau,  H.-T.: A dynamical approach to random matrix theory.  American Mathematical Soc., 2017


 \bibitem{Fo92} 
Forrester, P.J.: Selberg correlation integrals and the $1/r^2$ quantum many body system.  {\it Nucl. Phys. B}, {\bf 388}, 671--699  (1992)


     \bibitem{forrester} Forrester, P.J.:  Log-gases and Random Matrices.  London Mathematical Society Monographs 34, Princeton University Press, 2010
 

\bibitem{for2012}  Forrester, P.J.: The averaged characteristic polynomial for the Gaussian and chiral Gaussian ensembles with a source.  {\it J. Phys. A: Math. Theor.},  {\bf 46},  345204 (2013)

\bibitem{for2013}  Forrester, P. J.:  {Probability densities and distributions for spiked and general variance
Wishart $\beta$-ensembles}.	 {\it Random Matrices: Theory and Appl.}, \textbf{2}, 1350011 (2013)

  \bibitem{hko01}   Hughes, C.P.,  Keating, J.P.,   O'Connell, N.: {On the Characteristic Polynomial of a Random
Unitary Matrix}. {\it Commun. Math. Phys.},  {\bf 220}, no.2, 429--451 (2001)



\bibitem{Jo01} Johansson, K.: 
Universality of the local spacing distribution in certain ensembles of Hermitian Wigner matrices.  {\it Commun. Math. Phys.},  {\bf 215}, 683 --705  (2001)

\bibitem{jkt}  Jones,  R.C., Kosterlitz, J.M.,  Thouless, D. J.: The eigenvalue spectrum of a large symmetric random matrix with a finite mean. { \it J. Phys. A}, {\bf  11}, no. 3, L45--L48 (1978)

\bibitem{Ka16}
Katori,  M.:  Bessel processes, Schramm-Loewner evolution, and the
  Dyson model.  Springer Briefs in Mathematical Physics 11,  Springer,  2016


\bibitem{ks} Keating,  J. P.,  Snaith, N. C.:  {Random matrix theory and $\zeta (1/2 +
it)$}.  {\it Commun. Math. Phys.},  {\bf 214}, no.1,  57--89  (2000)
 
\bibitem{kfw11} Kuijlaars,  A.B.J.,  Martnez-Finkelshtein,  A.,  Wielonsky,  F.: {Non-intersecting squared Bessel paths: critical time and double scaling limit}.  {\it Commun. Math. Phys.},  {\bf 308}, 227--279  (2011)
   

\bibitem{KO}
 K\"{o}nig,  W., O'Connell, N.: Eigenvalues of the Laguerre process as non-colliding squared Bessel
processes.  {\it Electron. Comm. Probab.},  {\bf 6}, 107--114  (2001)

\bibitem{LSY19}
Landon,  B., Sosoe, P., Yau, H.-T.:  Fixed energy universality for Dyson Brownian motion.  {\it Adv. Math.}, 
{\bf 346}, 1137--1332 (2019)
 

\bibitem{Mo73} Montgomery, H.L.: The pair correlation of zeros of the zeta function.  Proc.~Sympos.~Pure Math., vol. 24, American Mathematical Society,  181--193 (1973)

\bibitem{Od89}   
Odlyzko, A.M.: The $10^{20}$th zero of the {Riemann} zeta function and 70
  million of its neighbours.  
Unpublished manuscript, available at
http://www.dtc.umn.edu/~odlyzko/unpublished/index.html, 1989
 
\bibitem{RS93} 
Rogers,  L.C.G.,  Shi, Z.: Interacting Brownian particles and the Wigner law. {\it Probab. Theory  Relat. Fields},  {\bf 95}, 555--570 (1993)  


\bibitem{stanley} Stanley, R.P.: {Some combinatorial properties of Jack symmetric functions}.  {\it Adv.~Math.}, {\bf 77}, 76--115 (1989)

 

\bibitem{tw2006} Tracy, C., Widom, H.:  {The Pearcey process}.  {\it Commun. Math. Phys.}, {\bf 263}, 381--400  (2006)

\bibitem{wang} Wang, D.:  {The largest eigenvalue of real symmetric, Hermitian and Hermitian self-dual random
matrix models with rank one external source, part I}. {\it  J. Stat. Phys.},  {\bf 146}, no. 4, 719--761  (2012)


\bibitem{wikiHP} Wikipedia, Hilbert-P\'olya conjecture, en.wikipedia.org/wiki/Hilbert-P\'olya\_conjecture

\bibitem{yan} Yan, Z.:  {A class of generalized hypergeometric
functions in several variables}. {\it  Can.\  J.\ Math.},  {\bf 44}, 1317--1338 (1992)



  
 






\end{thebibliography}
\end{document}